\theoremstyle{plain}
\newtheorem{THEOREM}{Theorem}[section]
\newtheorem{theorem}[THEOREM]{Theorem}
\newtheorem{corollary}[THEOREM]{Corollary}
\newtheorem{lemma}[THEOREM]{Lemma}
\theoremstyle{definition}
\newtheorem{definition}[THEOREM]{Definition}
\theoremstyle{remark}
\newtheorem{claim}[THEOREM]{Claim}
\newcommand{\thm}[1]{Theorem~\ref{#1}}
\newcommand{\lem}[1]{Lemma~\ref{#1}}
\newcommand{\N}{\ensuremath{\mathbb{N}}}   
\newcommand{\R}{\ensuremath{\mathbb{R}}}   
\newcommand{\F}{\ensuremath{\mathbb{F}}}
\def \RR {\mathbb{R}}
\def \a {\alpha}
\def \e {\epsilon}
\def \f {\varphi}
\def \k {\kappa}
\def \l {\lambda}
\def \L {\Lambda}
\def \n {\nabla}
\def \w {\omega}
\def \cF {\mathcal{F}}
\def \< {\langle}
\def \> {\rangle}
\def \p {\partial}
\def \ra {\rightarrow}
\def \ss {\subset}
\begin{document}

\title[Regularity problems for the 3D NSE and Euler equations]
{A unified approach to regularity problems for the 3D Navier-Stokes and Euler
equations: the use of Kolmogorov's dissipation range}

\author{A. Cheskidov}
\thanks{The work of A. Cheskidov is partially supported by NSF grant DMS--0807827}
\address[A. Cheskidov]
{Department of Mathematics, Stat. and Comp. Sci.\\
       University of Illinois\\
       Chicago, IL 60607}
\email{acheskid@math.uic.edu}

\author{R. Shvydkoy}
\thanks{R. Shvydkoy acknowledges the support of NSF grant DMS--0907812}
\address[R. Shvydkoy]
{Department of Mathematics, Stat. and Comp. Sci.\\
       University of Illinois\\
       Chicago, IL 60607}
\email{shvydkoy@math.uic.edu}

\begin{abstract}
Motivated by Kolmogorov's theory of turbulence we present a unified
approach to the regularity problems for the 3D Navier-Stokes and Euler equations.
We introduce a dissipation wavenumber $\L(t)$ that separates
low modes where the Euler dynamics is predominant from the high
modes where the viscous forces take over. Then using
an indifferent to the viscosity technique we 
obtain a new regularity criterion which is 
weaker than every Ladyzhenskaya-Prodi-Serrin condition in the viscous case, and reduces to the Beale-Kato-Majda criterion in the inviscid case.
In the viscous case we also we prove that
Leray-Hopf solutions are regular provided  $\L \in L^{5/2}$, which
improves our previous $\L \in L^\infty$ condition. We also 
show that $\L \in L^1$ for all Leray-Hopf solutions.
Finally, we prove that Leray-Hopf solutions are regular when the time-averaged spatial intermittency is small, i.e.,
close to Kolmogorov's regime.
\end{abstract}

\maketitle

\section{Introduction}

We study the 3D incompressible fluid equations
\begin{equation} \label{NSE}
\left\{
\begin{aligned}
&\p_t u - \nu \Delta u + (u \cdot \nabla)u + \nabla p = 0, \qquad x \in \mathbb{R}^3, t > 0,\\
&\nabla \cdot u =0,\\
&u(0)=u_0,
\end{aligned}
\right.
\end{equation}
where $u(x,t)$, the velocity, and $p(x,t)$, the pressure, are unknowns,
$u_0 \in L^2(\mathbb{R}^3)$ is the initial condition,
and $\nu\geq0$ is the kinematic  viscosity coefficient of the fluid. In the inviscid ($\nu=0$) and viscous ($\nu>0$) cases the equations \eqref{NSE} are referred to as the Euler and Navier-Stokes (NSE)
equations respectively.

The regularity of solutions to the NSE remains a
significant open problem taking its mathematical roots in the seminal work of Leray \cite{Leray}. A considerable body of literature has been devoted to studying regularity criteria which include the classical Ladyzhenskaya-Prodi-Serrin conditions $u\in L_t^rL_x^s$, $2/r+3/s \leq 1$, $s>3$, its notable extention to the case $s=3$ by Escauriaza, Seregin, and \v{S}ver\'ak \cite{ESS}, logarithmic improvements of the above, conditions in terms of the pressure, velocity gradients, extentions to Besov-type spaces, etc. We refer the reader to \cite{amann,bv,ct,CS,veiga,jap,P} for detailed accounts. Although some subtle questions in the area remain open, e.g. regularity in the largest critical space $\dot B^{-1}_{\infty,\infty}$, it becomes increasingly convincing that the current techniques are not capable of narrowing the gap between the scaling invariant range $2/r+3/s = 1$ and the range of $2/r+3/s=3/2$ enjoyed by all Leray-Hopf solutions. 

The mathematical theory of \eqref{NSE} is even less complete in the inviscid case insofar as the existence of
weak solutions is not known. Nevertheless, the local existence and uniqueness of solutions to the Euler equations
was proved in $H^s$ for $s>5/2$ by Kato \cite{Kato}. Moreover, these solutions can be extended forward in time
as long as $\|\nabla \times u\|_\infty$ is integrable, which is known as the Beale-Kato-Majda condition \cite{BKM,jap,P}.
 
In the present paper we propose a unified approach to the regularity problem for the fluid equations \eqref{NSE} utilizing Kolmogorov's concept of an inertial range in turbulent flow. Our basic idea is to define a time-dependent dissipation wavenumber $\L(t)$ that separates high frequency modes where viscosity prevails over the non-linear term from the low frequency modes where the Euler dynamics is dominant. Specifically, we define
\[
\begin{split}
Q(t)&= \min\{q: 2^{-p}\|u_p(t)\|_\infty < c_0\nu, \ \forall \ p > q, q\geq 0\},\\
\L(t) &= 2^{Q(t)},
\end{split}
\]
where $c_0 >0$ is some absolute constant and $u_p$ denotes the Littlewood-Paley projection on the $p$-th dyadic shell (see Section \ref{s:nse}).

To illustrate the separatory role of the wavenumber $\L$ between viscous and inviscid properties of the equation we present a regularity criterion based on vorticity $\omega = \n \times u$, which reduces to the classical Beale-Kato-Majda condition  in the inviscid case. More precisely, using
an indifferent to the viscosity technique, we prove the following theorem:
\begin{theorem}
Let $u(t)$ be a weak solution of \eqref{NSE} with $\nu\geq0$, such that $u(t)$
is regular on $(0,T)$ and
\begin{equation} \label{BKM-intr}
\int_0^T \|\omega_{\leq Q(t)}(t)\|_{B^0_{\infty,\infty}} \, dt <\infty, 
\end{equation}
then $u(t)$ is regular on $(0,T]$.
\end{theorem}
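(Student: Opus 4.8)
The plan is to rule out blow-up of one sufficiently strong norm of $u$ as $t\to T^-$: I would fix $s>5/2$ (the range needed for the inviscid case) and propagate a bound on $\|u(t)\|_{H^s}$ — equivalently, work in a suitable Besov space — from an interior time $t_0\in(0,T)$ up to $T$ by a logarithmic Gr\"onwall estimate. The mechanism is to split the frequency interactions at the dissipation wavenumber $\L(t)=2^{Q(t)}$: the part of the nonlinearity governed by modes below $Q(t)$ is controlled through the hypothesis \eqref{BKM-intr}, which supplies the missing $L^\infty$-control of the velocity gradient, while the part governed by modes above $Q(t)$ is absorbed into the viscous term using precisely the defining inequality $2^{-p}\|u_p\|_\infty<c_0\nu$ of the dissipation range. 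The time-dependence of $Q$ is harmless, because the splitting is carried out pointwise in $t$, after the Littlewood--Paley energy estimates, which are taken blockwise. In the inviscid case one reads $Q(t)\equiv+\infty$, so $\omega_{\le Q}=\omega$ and the scheme degenerates to the Beale--Kato--Majda argument.

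Concretely, applying $\Delta_q$ to \eqref{NSE}, pairing with $u_q$, and using $\nabla\cdot u=0$ to drop the pressure and the leading transport term gives
\[
\tfrac12\tfrac{d}{dt}\|u_q\|_2^2+\nu\|\nabla u_q\|_2^2=-\int_{\R^3}\Delta_q(u\cdot\nabla u)\cdot u_q\,dx;
\]
I would estimate the right-hand side by Bony's paraproduct decomposition, then multiply by $2^{2sq}$ and sum over $q$. Wherever a mode $u_p$ enters a nonlinear interaction through its $L^\infty$-norm, I split according to $p\le Q(t)$ or $p>Q(t)$. For $p\le Q(t)$, since $u$ is divergence free, $\|\nabla u_p\|_\infty\lesssim\|\omega_p\|_\infty$ on each shell, whence $\sum_{p\le Q}\|\nabla u_p\|_\infty\le(Q+1)\|\omega_{\le Q}\|_{B^0_{\infty,\infty}}$; and $Q(t)$ is controlled logarithmically by the propagated norm, because minimality in the definition of $Q$ forces $\|u\|_{H^s}\gtrsim\nu\,2^{(s-1/2)Q}$ by Bernstein, hence $Q\lesssim1+\log^+(\|u\|_{H^s}/\nu)$ (when $\nu=0$ one gets the analogous bound for $\sum_p\|\omega_p\|_\infty$ from the classical logarithmic Sobolev inequality, splitting the dyadic sum at an optimized scale). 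So the low-mode part of the nonlinearity contributes at most $C\big(1+\|\omega_{\le Q}\|_{B^0_{\infty,\infty}}\log(e+\|u\|_{H^s})\big)\|u\|_{H^s}^2$.

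For $p>Q(t)$ I bound the offending factor by $\|u_p\|_\infty<c_0\nu\,2^p$ and sum over the relevant shells; using $s>5/2$ so that the resulting dyadic sums converge, and in the delicate high--high interactions that \emph{both} interacting modes lie above $Q$ (each contributing a gain $\nu\,2^p$), one arrives at a bound of the form $Cc_0\,\nu\,\sum_q 2^{2q}\cdot2^{2sq}\|u_q\|_2^2$ plus terms of the previous, low-mode type; taking $c_0$ small then absorbs this into $\tfrac12\sum_q\nu\|\nabla u_q\|_2^2\,2^{2sq}$ on the left. This high-frequency absorption is the step I expect to be the main obstacle: one must verify, uniformly across the Bony decomposition and including the high--high cascade into shells that are themselves just above $Q$ (where the dissipation prefactor $2^{2q}$ is small), that every super-$Q$ contribution is genuinely dominated by the viscous term — and it is exactly here that the specific threshold in the definition of $Q$, i.e. its separating viscous from inertial behavior, is used in an essential way.

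Collecting the two parts yields, for a.e. $t\in(t_0,T)$,
\[
\tfrac{d}{dt}\|u\|_{H^s}^2\le C\big(1+\|\omega_{\le Q(t)}\|_{B^0_{\infty,\infty}}\log(e+\|u\|_{H^s}^2)\big)\,\|u\|_{H^s}^2 .
\]
Writing $y(t)=\log(e+\|u(t)\|_{H^s}^2)$ this becomes $y'\le C\big(1+\|\omega_{\le Q}\|_{B^0_{\infty,\infty}}\big)y$, so Gr\"onwall gives
\[
y(t)\le y(t_0)\exp\!\Big(C\!\int_{t_0}^{T}\big(1+\|\omega_{\le Q(\tau)}\|_{B^0_{\infty,\infty}}\big)\,d\tau\Big),
\]
which is finite by \eqref{BKM-intr}. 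Hence $\sup_{t\in(t_0,T)}\|u(t)\|_{H^s}<\infty$; since $s>5/2$, the local existence theory for \eqref{NSE} (Kato's theorem when $\nu=0$, the standard theory when $\nu>0$, together with weak--strong uniqueness) then extends $u$ as a regular solution past $T$, so $u$ is regular on $(0,T]$.
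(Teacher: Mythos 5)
Your proposal is correct and follows essentially the same route as the paper's proof: a logarithmic Gr\"onwall estimate on $\|u\|_{H^s}$ obtained by splitting the nonlinearity at $Q(t)$, controlling the sub-$Q$ interactions via the Besov logarithmic Sobolev inequality and the hypothesis \eqref{BKM-intr}, absorbing the super-$Q$ interactions into the viscous term using the defining inequality $\l_p^{-1}\|u_p\|_\infty<c_0\nu$, and then invoking local well-posedness plus uniqueness to reach $t=T$. The only difference is cosmetic: you run the energy estimate on Littlewood--Paley blocks with Bony's decomposition, whereas the paper tests against $\p^{2\a}u$ and decomposes the resulting trilinear terms by hand.
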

Here the regularity should be understood as the continuity
of the $H^3$ norm.
If the viscosity coefficient $\nu$ is zero, then $\L(t) \equiv \infty$
and hence, the condition~\eqref{BKM-intr} naturally turns into the Beale-Kato-Majda criterion as stated in \cite{jap}. On the other hand,  we show that the condition~\eqref{BKM-intr} is weaker than every
Ladyzhenskaya-Prodi-Serrin condition in the viscous case. 

In the viscous case we also derive a regularity criterion in terms of only
$\L$. The estimates based on the use of $B^{-1}_{\infty,\infty}$-norm performed in
\cite{CS} allow us to conclude that $\L\in L^\infty(0,T)$ implies regularity of $u$ up to $T$. Here this condition will be weakened to $\L\in L^{5/2}(0,T)$, while for every Leray-Hopf solution one can show that $\L\in L^1(0,T)$.

With regard to connection between $\L$ and the classical Kolmogorov dissipation wavenumber we prove the direct inequality (up to an absolute multiple)
$$
\frac{1}{T} \int_0^T \L(t) dt  = \langle \L \rangle \lesssim \k_d = \left( \frac{\e}{\nu^3} \right)^{\frac{1}{1+d}},
$$
where $d$ is the intermittency parameter representing dimension of a dissipation set, and $\e = \nu \langle \| \n u\|_2^2 \rangle$ is the energy dissipation rate (see \cite{Frisch}). The definition of $d$ will be given in Section \ref{s:turb} based on the average level of saturation of Bernstein's inequalities inside the $\L(t)$-th dyadic shell. We then prove in \thm{t:inter} that if  $d>3/2$ then the solution $u$ is regular. This provides an analytical evidence to the fact that the flows usually observed in simulations of intermittent turbulence
are regular as they show only moderate deviations from the Kolmogorov's predicted value of $d =3$ (see \cite{Frisch} and references therein). 


Finally, in Section \ref{s:hyper} the use of $\L$ will be adapted to show regularity for a variant of
a hyperdissipative Navier-Stokes system and to give a short proof of a known recent result of Tao, \cite{T}.

\section{3D incompressible equations of fluid motion}\label{s:nse}
Let us first recall several classical definitions and results.
\begin{definition}
A weak solution of \eqref{NSE} with $\nu\geq 0$ on $[0,T]$ (or $[0,\infty)$ if $T=\infty$)
with the divergence-free initial data $u_0 \in L^2(\mathbb{R}^3)$ is a function
$u:[0,T] \to L^2(\RR^3)$ in the class
\[
u \in  C_{\mathrm{w}}([0,T];L^2(\RR^3)),
\]
satisfying $u(0)=u_0$,
\begin{multline}\label{weakform}
(u(t),\f(t))- (u_0,\f(0))\\ = \int_0^t \left\{  (u(s),\p_s \f(s)) + \nu (u(s),\Delta \f(s)) + (u(s)\cdot \n \f(s), u(s)) \right\} \, ds
\end{multline}
and $\n_x \cdot u(t) =0$ in the sense of distributions
for all $t\in[0,T]$ and all test functions
$\f \in C_0^\infty([0,T]\times \RR^3)$ with $\n_x \cdot \f = 0$. Here
$(\cdot,\cdot)$ stands for the $L^2$-inner product.
\end{definition}
In the case $\nu>0$ we also define Leray-Hopf solutions:
\begin{definition}
A weak solution $u(t)$ of \eqref{NSE} with $\nu>0$ on $[0,T]$ is called a
Leray-Hopf solution if $u \in L^2([0,T];H^1(\RR^3))$ and
the energy inequality
\begin{equation}\label{SEI}
\|u(t)\|_2^2 + 2\nu \int_{t_0}^t \|\n u(s)\|_2^2 \, ds \leq \|u(t_0)\|_2^2,
\end{equation}
is satisfied
for almost all $t_0 \in (0,T)$  and all $t \in (t_0,T]$.
\end{definition}
In 1934 Leray \cite{Leray} proved the existence of Leray-Hopf solutions
on $[0,\infty)$ for every $\nu>0$ and every divergence-free initial data
$u_0 \in L^2(\mathbb{R}^3)$. In the case $\nu=0$ the existence of weak solutions is still not known. Given $u_0 \in H^s$, the local existence
of $H^s$-continuous solutions is known for $s> 5/2$ in the
inviscid case, and for $s \geq 1/2$ in the viscous case.
Moreover, the continuity
of some supercritical $H^s$-norm ($s \geq 1/2$) of a Leray-Hopf solution implies the continuity of
all supercritical $H^s$-norms. Therefore, for simplicity, we choose the
continuity of the $H^3$-norm as a definition of regularity, even though 
any $s>5/2$ would be appropriate as well.

\begin{definition}
A week solution $u(t)$ of \eqref{NSE} with $\nu\geq 0$ is regular on a time
interval $\mathcal{I}$ if $\|u(t)\|_{H^{3}}$ is continuous on $\mathcal{I}$.
\end{definition}
An interval $\mathcal{I}$ where $u(t)$ is regular is called an interval of regularity.
Viscous regular solutions are unique in the class of Leray-Hopf solutions,
which results in the following regularity condition:

\begin{theorem}[Leray] \label{t:LHreg}
Let $u(t)$ be a Leray-Hopf solution of \eqref{NSE} with $\nu>0$ on $[0,T]$. If
for every interval of regularity $(\alpha,\beta) \subset (0,T)$
\[
\limsup_{t \to \beta -} \|u(t)\|_{H^{s}} < \infty,
\]
for some $s > 1/2$, then $u(t)$ is regular on $(0,T]$.
\end{theorem}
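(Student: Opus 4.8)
The plan is a continuation (open--closed) argument on the set of regular times, powered by the local existence theory in $H^s$ and by weak-strong uniqueness in the Leray-Hopf class, both available since $\nu>0$. Let $\mathcal{R}\subset(0,T)$ denote the union of all open intervals of regularity; as an open subset of $\R$ it is a countable disjoint union of open intervals, each of which is again an interval of regularity, so $\mathcal{R}$ is the disjoint union of its connected components. Since $u\in L^2([0,T];H^1(\R^3))$ and the energy inequality \eqref{SEI} holds for a.e.\ initial time, the set $G$ of times $t_0$ with $u(t_0)\in H^1(\R^3)$ for which $u|_{[t_0,T]}$ is a Leray-Hopf solution is co-null in $(0,T)$; for $t_0\in G$, local existence in $H^1$ (together with the parabolic smoothing available for $\nu>0$) produces a solution that, by weak-strong uniqueness, coincides with $u$ on some $[t_0,t_0+\tau_0]$ and is regular on $(t_0,t_0+\tau_0)$, so $(t_0,t_0+\tau_0)\subset\mathcal{R}$ and in particular $\mathcal{R}\neq\emptyset$.

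The core step is to show that every connected component $(\alpha,\beta)$ of $\mathcal{R}$ has $\beta=T$. Suppose not. On $(\alpha,\beta)$ one has $u(t)\in H^3\subset H^s$ (or $u(t)\in H^s$ via the remark that continuity of the $H^3$-norm forces continuity of every supercritical $H^s$-norm, when $s>3$), so the hypothesis provides $M<\infty$ and $\eta\in(0,\beta-\alpha)$ with $\|u(t)\|_{H^s}\le M$ on $(\beta-\eta,\beta)$. Since $s>1/2$ is strictly subcritical, local existence in $H^s$ yields $\tau(M)>0$, depending only on $M$ (and $s,\nu$), such that any datum of $H^s$-norm at most $M$ spawns a solution over a time interval of length $\tau(M)$. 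Choose $t_1\in(\beta-\eta,\beta)\cap G$ with $t_1>\beta-\tau(M)/2$; then $u(t_1)\in H^3$ (as $t_1$ lies in an interval of regularity), $t_1+\tau(M)>\beta$, and weak-strong uniqueness identifies $u$ on $[t_1,t_1+\tau(M)]\cap[0,T]$ with the regular solution issued from $u(t_1)$. Pasting with the regularity on $(\alpha,\beta)$, $u$ is regular on an interval strictly containing $(\alpha,\beta)$, contradicting maximality. Hence all components end at $T$; being mutually disjoint there is exactly one, so $\mathcal{R}=(\alpha_*,T)$ with $0\le\alpha_*<T$.

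It remains to see that $\alpha_*=0$ and to cross the endpoint $T$. If $\alpha_*>0$, pick $t_0\in G\cap(0,\alpha_*)$ (possible since $G$ is co-null); then $(t_0,t_0+\tau_0)$ is an open interval of regularity contained in $\mathcal{R}=(\alpha_*,T)$, which forces $t_0\ge\alpha_*$, absurd. Thus $\mathcal{R}=(0,T)$ and $u$ is regular on $(0,T)$. Applying the hypothesis to the interval $(0,T)$ itself gives $\limsup_{t\to T-}\|u(t)\|_{H^s}<\infty$, and one more application of the local-existence--weak-strong-uniqueness step at a time $t_1<T$ near $T$ continues $u$ as a regular solution past $T$; hence $\|u(t)\|_{H^3}$ is continuous on $(0,T]$.

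The main obstacle is the continuation step: it requires that the local existence time in $H^s$ be controlled by $\|u_0\|_{H^s}$ alone — which is exactly where the strict subcriticality $s>1/2$ is used — and that the weak-strong identification be performed at a time $t_1$ which simultaneously has controlled $H^s$-norm and obeys the Leray-Hopf energy inequality from $t_1$. The remaining open--closed bookkeeping is routine.
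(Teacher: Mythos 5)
The paper does not actually prove this statement --- it is quoted as a classical result of Leray, and the only place the underlying mechanism appears is inside the proof of Theorem~\ref{thm:main}, where the authors use exactly your continuation device (local well-posedness in $H^s$ with existence time $\e \sim 1/(cM)$ depending only on $M=\sup\|u\|_{H^s}$, followed by uniqueness in the Leray--Hopf class and weak $L^2$-continuity to reach the endpoint). Your argument is the standard proof and is correct: the decomposition of the regular set into maximal components, the use of the almost-everywhere validity of \eqref{SEI} to find admissible restart times, and the observation that $s>1/2$ is precisely what makes the local existence time a function of the norm alone are all the right ingredients, so there is nothing of substance to object to.
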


Note that merely weak viscous solutions are not known to satisfy the above property.
The possibility of a blow up from the right for such solutions
has not been ruled out.


Let us now briefly recall the definition of Besov spaces.
We will use the notation $\lambda_q = 2^q$ (in some inverse length
units). Let $B_r$ denote the ball centered at $0$ of radius $r$
in $\RR^{3}$. Let us fix a nonnegative radial function $\chi \in {C_0^{\infty}} (B_1)$ such that $\chi(\xi)=1$ for $|\xi|\leq
1/2$. We further define
$\f(\xi) = \chi(\l_1^{-1}\xi) - \chi(\xi)$ and $\f_q(\xi) = \f(\l_q^{-1}\xi)$.
For a tempered distribution vector field $u$ let us denote
\[
u_q = \cF^{-1}(\f_q) \ast u, \quad \text{ for } q > -1, \qquad 
u_{-1} = \cF^{-1}(\chi) \ast u,
\]
where $\cF$ denotes the Fourier transform. So, we have $u = \sum_{q=-1}^\infty u_q$
in the sense of distributions.
We also use the following notation
\[
u_{\leq Q} = \sum_{q \leq Q} u_q, \qquad u_{\geq Q} = \sum_{q \geq Q} u_q.
\]
Finally, let us recall that a tempered distribution $u$ belongs to
$B^s_{p,\infty}$ iff
$$
\|u\|_{B^s_{p,\infty}} = \sup_q \l_q^s \|u_q\|_p  < \infty.
$$

\section{A universal regularity criterion}
Let $u(t)$ be a weak solution of \eqref{NSE} with $\nu \geq 0$ on $[0,T]$.
We define our dissipation wavenumber as
\begin{equation}\label{lambda}
\L(t)=
\min\{\l_q: \l_p^{-1}\|u_p(t)\|_\infty < c_0\nu, \ \forall \ p > q, q\geq 0\},
\end{equation}
where $c_0$ is an absolute constant, which will be defined later. Note that
$\L(t) \equiv \infty$ in the inviscid case $\nu=0$.
Let $Q(t)\in \N$ be such that $\l_{Q(t)} =\L(t)$. Directly from the definition we have
\begin{equation}\label{reverseQ}
\|u(t)_{Q(t)}\|_\infty \geq c_0 \nu \L(t),
\end{equation}
provided $1<\L(t)<\infty$.

We will now obtain a universal regularity criterion, which is valid in both
viscous and inviscid cases. This criterion is stated
in terms of a Besov type bound on frequencies smaller than $\L(t)$.
For this purpose we consider the following function
\[
f(t)=\|u_{\leq Q(t)}(t)\|_{B^{1}_{\infty,\infty}}  =\sup_{q \leq Q(t)} \l_q\|u_q(t)\|_\infty,
\]
or in terms of vorticity $\w = \n \times u$,
\[
f(t) \sim \| \w_{\leq Q(t)}(t) \|_{B^0_{\infty,\infty}}.
\]
\begin{theorem} \label{thm:main}
Let $u$ be a weak solution to \eqref{NSE} with $\nu \geq 0$ on $[0,T]$.
Assume that $u(t)$ is regular on $[0,T)$, and $f \in L^1(0,T)$,  i.e.
\begin{equation} \label{BKM}
\int_0^T \|\omega_{\leq Q(t)}(t)\|_{B^0_{\infty,\infty}} \, dt <\infty. 
\end{equation}
Then $u(t)$ is regular on $[0, T]$. 
\end{theorem}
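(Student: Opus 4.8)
The overall strategy is to produce an a priori bound on $\|u(t)\|_{H^3}$ that is uniform on $[0,T)$; once this is available, \thm{t:LHreg} (in the viscous case) and Kato's local existence theorem together with the usual continuation principle (in the inviscid case) upgrade regularity from $[0,T)$ to $[0,T]$. Since a solution that is regular on $[0,T)$ is in fact smooth there, the a priori estimates below are legitimate. I would run a Littlewood--Paley energy estimate: applying $\Delta_q$ to \eqref{NSE}, pairing with $\lambda_q^{6}u_q$, using that $u_q$ is divergence free so that the pressure drops out, and summing over $q$ gives
\[
\tfrac12\tfrac{d}{dt}\|u\|_{\dot H^3}^2+\nu\|u\|_{\dot H^4}^2=-\sum_q\lambda_q^{6}\big(\Delta_q\diver(u\otimes u),u_q\big),
\]
(any $m>5/2$ would do in place of $m=3$). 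To estimate the right-hand side I would split $u\otimes u$ by Bony's paraproduct into the transport (low--high) piece $\sum_{|p-q|\le2}S_{p-1}u\otimes u_p$, the high--low piece, and the high--high remainder. For the transport piece I would, as usual, replace $\Delta_q(S_{q-1}u\cdot\nabla u_q)$ by the commutator $[\Delta_q,S_{q-1}u\cdot\nabla]u_q$, the genuinely transported part being $L^2$-orthogonal to $u_q$ since $\diver S_{q-1}u=0$, and use $\|[\Delta_q,S_{q-1}u\cdot\nabla]u_q\|_2\lesssim\|\nabla S_{q-1}u\|_\infty\|u_q\|_2$; the other two pieces reduce, after moving the divergence onto $u_q$ and applying H\"older, to dyadic sums of $\|u_p\|_\infty\,\|u_{p'}\|_2\,\lambda_q\|u_q\|_2$.

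The point at which the dissipation wavenumber enters is that every low-frequency factor appearing above gets split as $u_{\le Q(t)}+u_{>Q(t)}$. For a factor localized to shells $p\le Q(t)$ the definition of $f$ gives $\lambda_p\|u_p\|_\infty\le f(t)$, so $\|\nabla u_{\le Q}\|_\infty\le\sum_{p\le Q}\lambda_p\|u_p\|_\infty\lesssim Q(t)f(t)$; the logarithmic factor $Q(t)\sim\log\Lambda(t)$ is harmless. For a factor localized to shells $p>Q(t)$ one instead has the pointwise viscous bound $\|u_p\|_\infty<c_0\nu\lambda_p$ coming from \eqref{lambda}; inserting this into the relevant dyadic sums and invoking Schur's test (which in particular certifies that no net derivative is lost) produces contributions of the form $c_0\nu\sum_q\lambda_q^{8}\|u_q\|_2^2=c_0\nu\|u\|_{\dot H^4}^2$, together with lower-order analogues, and these are absorbed into $\nu\|u\|_{\dot H^4}^2$ once the absolute constant $c_0$ (still at our disposal) is fixed small enough. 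In the inviscid case $Q\equiv\infty$, so every factor is a ``low'' factor, there is nothing to absorb, $f(t)=\|\omega(t)\|_{B^0_{\infty,\infty}}$, and the estimate is exactly the logarithmic Sobolev inequality $\|\nabla u\|_\infty\lesssim\|u\|_2+\|\omega\|_{B^0_{\infty,\infty}}(1+\log^+\|u\|_{H^3})$ that underlies the Besov form of Beale--Kato--Majda.

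Collecting the estimates, and using $\|u(t)\|_2\le\|u_0\|_2$ (the energy inequality when $\nu>0$, conservation when $\nu=0$) together with the bound $\log\Lambda(t)\lesssim1+\log^+\|u(t)\|_{\dot H^3}$ valid when $\nu>0$ — an immediate consequence of \eqref{reverseQ} and Bernstein's inequality, namely $\Lambda^{m-1/2}\lesssim\|u\|_{\dot H^m}/(c_0\nu)$ — I expect to arrive at a differential inequality of the form
\[
\tfrac{d}{dt}\|u\|_{H^3}^2\le C\,f(t)\big(1+\log^+\|u(t)\|_{H^3}\big)\|u(t)\|_{H^3}^2+C\big(1+\|u(t)\|_{H^3}^2\big),
\]
with $C$ depending on $u_0$ and $\nu$. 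Setting $z(t)=\log(e+\|u(t)\|_{H^3}^2)$ turns this into $z'\le C(1+f(t))(1+z(t))$, and since $f\in L^1(0,T)$ by \eqref{BKM}, Gronwall's lemma yields a (double-exponential) bound on $\|u(t)\|_{H^3}$ uniform over $[0,T)$, which is the desired a priori bound; regularity on $[0,T]$ follows.

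The main obstacle is the nonlinear estimate behind the first two steps: one must push the $\Lambda(t)$-splitting through each of the low--high, high--low and high--high interactions (and the mixed cases generated by the splitting), verifying in every instance that the ``low'' part is paid for by $f(t)$ up to the benign $\log\Lambda$ factor while the ``high'' part is genuinely reabsorbed by the viscous term with an \emph{absolute} small constant $c_0$, and in particular that the dyadic (Schur) sums lose no derivative. A secondary, largely bookkeeping, difficulty is that $Q(t)$ is only measurable in $t$, so the whole estimate is pointwise in $t$ a.e., and one must keep track of the finitely many low-frequency ($q=-1$) terms and the leftover $\nu\|u\|_{\dot H^3}^2$-type terms, all of which are lower order and absorbed into $C(1+\|u\|_{H^3}^2)$.
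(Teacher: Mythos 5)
Your proposal is correct and follows essentially the same strategy as the paper's proof: split the nonlinearity at the dissipation wavenumber $\L(t)$, absorb the purely high-frequency interactions into the viscous term by taking $c_0$ small (using $\l_p^{-1}\|u_p\|_\infty<c_0\nu$ for $p>Q$), bound every term containing a low-frequency factor by $f(t)\,\|u\|_{H^s}^2(1+\log_+\|u\|_{H^s})$ via a logarithmic Sobolev estimate, and close with Gr\"onwall followed by local well-posedness, uniqueness, and weak continuity to reach $t=T$. The only differences are organizational --- Bony paraproducts and commutators versus the paper's multiindex Leibniz expansion of $\p^{\a}$ applied to $(u\cdot\n)u$, and your bound $\|\n u_{\leq Q}\|_\infty\lesssim Q(t)f(t)$ with $Q(t)\lesssim 1+\log_+\|u\|_{H^3}$ in place of the paper's explicit Besov-type logarithmic Sobolev inequality --- and neither changes the substance.
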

We note that the theorem is valid for both Navier-Stokes and the Euler equations. In the case when $\nu = 0$ we have $\L \equiv \infty$, and thus we recover the classical Beal-Kato-Majda criterion  extended to Besov spaces as in Kozono, Ogawa, and Taniuchi \cite{jap}, see also Planchon \cite{P}. Our technique of proving \thm{thm:main} does not distinguish between the cases $\nu>0$ and $\nu=0$ and is based on the frequency separation method and logarithmic Sobolev inequality.
\begin{proof}
Since $u(t)$ is regular on $[0,T)$, we have that $\|u(t)\|_{H^s}$
is continuous on $[0,T)$ for $s=3$. In what follows
we will prove the following estimate:
\begin{equation} \label{main-ineq}
\frac{1}{2}\frac{d}{dt}\|u(t)\|^2_{H^s} \leq C (1+f(t))\|u(t)\|^2_{H^s}
(1+\log_+\|u(t)\|_{H^s}),
\end{equation}
where $C$ is an absolute constant. Then the Gr\"{o}nwall inequality
implies that $\|u(t)\|_{H^s}$ is bounded on $[0,T)$. The regularity of $u$ now follows from the standard argument. Indeed, let $M=\sup_{t\in[0,T)}\|u(t)\|_{H^s}$. By the local well-posedness of regular solutions for $\e<\frac{1}{cM}$ there exists a regular solution $v \in C([T-\e,T];H^s)$ with $v(T-\e) = u(T-\e)$. By uniqueness, $v = u$ on $[T-\e,T)$. By weak continuity of $u$ we deduce that $u(T) = v(T)$.  Hence, $u(t)$ is regular on the closed interval $[0,T]$.

We now proceed to proving \eqref{main-ineq}.
Testing  the equation \eqref{NSE} against $\p^{2\a} u$ on the interval $[0,T)$, where $\a$ is a multiindex with $|\a|\leq s$, we obtain
\begin{equation*}
\frac{1}{2}\frac{d}{dt} \sum_{|\a| \leq s} \|\p^{\a} u\|_2^2 \leq  -\nu \|u\|_{H^{s+1}}^2 + \nu \|u\|_2^2+ 
 \sum_{\substack{|\a|\leq s\\ i \leq \a}} \left| \int \p^{\a-i} u \cdot \n \p^i u \cdot \p^\a u \right|.
\end{equation*}
Let us note that if $\nu>0$, then $u\in H^s$ implies $u\in H^{s+1}$ automatically on $[0,T)$ by the classical bootstrapping argument. The term $\nu \|u\|_2^2$  above was added to compensate the homogeneity of the viscous term near low frequencies. Furthermore, let us note that if $i = \a$ or if $\a=0$, the trilinear term vanishes due to incompressibility rendering the estimate
\begin{equation}\label{test-s}
\frac{1}{2}\frac{d}{dt} \|u\|_{H^s} \leq  -\nu \|u\|_{H^{s+1}}^2 + \nu \|u\|_2^2+
\sum_{\substack{1\leq |\a|\leq s\\ i < \a}} \left|\int \p^{\a-i} u \cdot \n \p^i u \cdot \p^\a u\right|.
\end{equation}
Now all the trilinear terms have at least one and at most $|\a|$ derivatives on each component.
We now prove the following auxiliary lemma.
\begin{lemma}\label{l:aux}
Suppose $1 \leq |\a| \leq s$, and $|\a_1| + |\a_2| + |\a_3| = 2|\a|+1$ and all indices satisfy $1\leq |\a_i| \leq |\a|$. Let $Q \in \N$ and let $u\in H^{s}$. Then
\begin{equation}\label{ }
\left|\int \p^{\a_1} u_{\leq Q} \p^{\a_2} u \p^{\a_3} u\right|  \leq C \|u_{\leq Q}\|_{B^{1}_{\infty,\infty}}\| u\|_{H^s} (1+ \log_+\|u\|_{H^s}).
\end{equation}  
\end{lemma}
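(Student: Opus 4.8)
The plan is to resolve all three factors into Littlewood--Paley blocks and to organize the resulting triple sum according to which factor carries the top frequency. Since the spatial integral of a product of three dyadic blocks vanishes unless the two largest frequencies are comparable, every surviving interaction is of high--high--low type; the low block will be placed in $L^\infty$ and tied to $\|u_{\le Q}\|_{B^1_{\infty,\infty}}$, while the two high blocks go into $L^2\times L^2$ and assemble into $\|u\|_{H^s}^2$. It is worth recording at the outset a scaling observation: the left--hand side is cubic in $u$, and $\|u_{\le Q}\|_{B^1_{\infty,\infty}}$ supplies exactly one power, so the substitution $u\mapsto\mu u$ forces the remaining two powers to enter as $\|u\|_{H^s}^2$. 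The estimate I will establish is therefore
\begin{equation*}
\left|\int \p^{\a_1} u_{\leq Q}\, \p^{\a_2} u\, \p^{\a_3} u\right|  \leq C\, \|u_{\le Q}\|_{B^{1}_{\infty,\infty}}\,\| u\|_{H^s}^2\, (1+ \log_+\|u\|_{H^s}),
\end{equation*}
which is exactly the form consumed by \eqref{main-ineq}.

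First I would use Bernstein's inequality to record the two basic block bounds. For a low block at frequency $\l_p$ with $p\le Q$ one has $\|\p^{\a_1}u_p\|_\infty \lesssim \l_p^{|\a_1|-1}\,(\l_p\|u_p\|_\infty)\le \l_p^{|\a_1|-1}\|u_{\le Q}\|_{B^1_{\infty,\infty}}$, while for a high block $\|\p^{\a_2}u_j\|_2\lesssim \l_j^{|\a_2|-s}a_j$ with $a_j:=\l_j^{s}\|u_j\|_2$, so that $\sum_j a_j^2\le\|u\|_{H^s}^2$. Applying H\"older in $L^\infty\times L^2\times L^2$ to the high--high--low blocks $p\le j\sim k$ and using the constraint $|\a_1|+|\a_2|+|\a_3|=2|\a|+1$, the dyadic weights collect, after summing the surplus $|\a_1|-1$ derivatives against $\l_p\le\l_j$, into the single power $\l_j^{\,2|\a|-2s}$ times $a_ja_k$. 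Away from the borderline case identified below, either this power is strictly negative or the sum over the low index $p$ is geometric, so in both cases the double sum is absolutely convergent and bounded by $C\,\|u_{\le Q}\|_{B^1_{\infty,\infty}}\|u\|_{H^s}^2$ with no logarithmic loss.

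The single interaction that is not absolutely summable is the borderline one, $|\a|=s$ and $|\a_1|=1$ (which forces $|\a_2|=|\a_3|=s$), and it is here that the logarithm is produced. I would deliberately avoid summing blocks crudely, since $\sum_{p\le Q}1$ contributes a factor $\sim\log\L$ that is useless in the inviscid regime $\L\equiv\infty$. Instead I apply H\"older to reach $\|\nabla u_{\le Q}\|_\infty\,\|u\|_{H^s}^2$ and then invoke the homogeneous logarithmic Sobolev (Brezis--Gallouet / Kozono--Ogawa--Taniuchi) inequality
\begin{equation*}
\|f\|_\infty \lesssim \|f\|_{B^0_{\infty,\infty}}\Big(1+\log_+\frac{\|f\|_{H^{r}}}{\|f\|_{B^0_{\infty,\infty}}}\Big),\qquad r>3/2,
\end{equation*}
with $f=\nabla u_{\le Q}$ and $r=s-1$. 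Using $\|\nabla u_{\le Q}\|_{B^0_{\infty,\infty}}\sim\|u_{\le Q}\|_{B^1_{\infty,\infty}}$ and $\|\nabla u_{\le Q}\|_{H^{s-1}}\le\|u\|_{H^s}$, this yields $\|\nabla u_{\le Q}\|_\infty\lesssim\|u_{\le Q}\|_{B^1_{\infty,\infty}}(1+\log_+\|u\|_{H^s})$ and hence the desired bound for the borderline term.

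The main obstacle is exactly this borderline term. Its difficulty is twofold: the logarithm must emerge as $\log_+\|u\|_{H^s}$ rather than $\log\L$, so that the inequality survives the passage to the Euler case $\L=\infty$, and this is precisely what the balancing of the low-- and high--frequency cutoffs inside the logarithmic Sobolev inequality accomplishes; moreover this balancing is what pins down the standing assumption $s>5/2$, needed so that $r=s-1>3/2$. The remaining points --- verifying that the other high--low permutations and the diagonal block $p\sim j$ obey the same geometric bound, and that the quantity $x\log_+(1/x)$ arising when $\|u_{\le Q}\|_{B^1_{\infty,\infty}}<1$ stays bounded --- are routine once the generic and borderline interactions are settled.
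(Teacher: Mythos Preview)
Your proposal is correct and follows essentially the same route as the paper: a Littlewood--Paley paraproduct decomposition into high--high--low interactions, H\"older in $L^\infty\times L^2\times L^2$, and the logarithmic Sobolev inequality $\|\nabla u_{\le Q}\|_\infty\lesssim\|u_{\le Q}\|_{B^1_{\infty,\infty}}(1+\log_+\|u\|_{H^s})$ to control the low factor. Two remarks: your scaling observation correctly identifies that the right-hand side should carry $\|u\|_{H^s}^2$ (the paper's own estimates in the proof produce the square, so the stated exponent is a typo), and your isolation of the single borderline interaction $|\a|=s$, $|\a_1|=1$ as the sole source of the logarithm is a mild sharpening---the paper simply applies the logarithmic inequality uniformly to every term---but note that the analogous borderline case also arises in the permutations where the second or third factor plays the low role with the corresponding $|\a_i|=1$, so those are handled by the same log-Sobolev step rather than by a purely geometric sum.
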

\begin{proof}
We have 
\[
\begin{split}
\int \p^{\a_1} u_{\leq Q} \p^{\a_2} u \p^{\a_3} u &= \int \p^{\a_1} u_{\leq Q} \p^{\a_2} u_{\leq Q} \p^{\a_3} u_{\leq Q} \\ &+ \int \p^{\a_1} u_{\leq Q} \sum_{\substack{q' >Q, q'' >Q-2\\ |q'-q''|\leq 2}}\p^{\a_2} u_{q'} \p^{\a_3} u_{q''} = I + II
\end{split}
\]
To estimate $II$ we argue as follows
$$
| II | \lesssim \|\n u_{\leq Q} \|_\infty \l_Q^{\a_1 - 1} \sum_{q >Q-2}\l_q^{\a_2+\a_3} \|u_{q}\|_2^2.
$$
Since $s>3/2$, by the Besov-type logarithmic Sobolev inequality we have 
$$
\| \n u_{\leq Q} \|_\infty \lesssim   \|u_{\leq Q}\|_{B^{1}_{\infty,\infty}} (1+ \log_+\|u\|_{H^s})
$$ 
(see Appendix or \cite{jap} for more general versions). Thus, continuing the above,
\[
\begin{split}
| II | & \lesssim \|u_{\leq Q}\|_{B^{1}_{\infty,\infty}} (1+ \log_+\|u\|_{H^s})  \sum_{q >Q-2}\l_q^{\a_1 + \a_2+\a_3 - 1} \|u_{q}\|_2^2 \\
&\lesssim \|u_{\leq Q}\|_{B^{1}_{\infty,\infty}} (1+ \log_+\|u\|_{H^s}) \|u\|^2_{H^s}.
\end{split}
\]
As to $I$ we have the decomposition
\[
\begin{split}
I &= \int \sum_{q \leq Q, |q'-q|, |q''-q| \leq 2 }\p^{\a_1} u_q \p^{\a_2} u_{q'} \p^{\a_3} u_{\leq q''} \\
& + \int \sum_{q \leq Q, |q'-q|, |q''-q| \leq 2 }\p^{\a_1} u_q \p^{\a_2} u_{\leq q'} \p^{\a_3} u_{q''} \\
&+ \int \sum_{q \leq Q, |q'-q''| \leq 2, q' \geq q }\p^{\a_1} u_q \p^{\a_2} u_{q'} \p^{\a_3} u_{ q''}  - \text{repeated terms}
\end{split}
\]
Estimates for all of these terms are similar. We show one for the first term only. We have
\[
\begin{split}
\left| \int  \sum_{\substack{q \leq Q \\ |q'-q|\leq 2\\ |q''-q| \leq 2} }\p^{\a_1} u_q \p^{\a_2} u_{q'} \p^{\a_3} u_{\leq q''} \right| &
\lesssim  \sum_{\substack{q \leq Q \\ |q'-q|\leq 2\\ |q''-q| \leq 2} }\l_q^{\a_1+\a_2+\a_3-1}\| u_q \|_2 \| u_{q'} \|_2 \|\n u_{\leq q''} \|_\infty \\
 &\lesssim  \|u_{\leq Q}\|_{B^{1}_{\infty,\infty}} (1+ \log_+\|u\|_{H^s}) \sum_{q\leq Q} \l_q^{2s} \|u_q\|_2^2 \\ 
 &\lesssim  \|u_{\leq Q}\|_{B^{1}_{\infty,\infty}} (1+ \log_+\|u\|_{H^s})\|u\|_{H^s}^2.
\end{split}
\]
\end{proof}
Note that if $\L\equiv \infty$, then \lem{l:aux} already finishes the proof of
\eqref{main-ineq}. Otherwise, going back to \eqref{test-s} and in view of \lem{l:aux} the only terms left to estimate are
\[
\int \p^{\a-i} u_{>Q(t)} \cdot \n \p^i u_{>Q(t)} \cdot \p^{\a} u_{>Q(t)}.
\]
These will be absorbed by the viscous term after a proper frequency localization. We split each of them into the sum
\[
\begin{split}
\int \p^{\a-i} u_{>Q} \cdot \n \p^i u_{>Q} \cdot \p^\a u_{>Q} & = \int \sum_{\substack{q >Q\\ |q'-q|,|q''-q| \leq 2}} \p^{\a_1} u_{q'} \p^{\a_2} u_{q''} \p^{\a_3} u_{Q< \cdot \leq q} \\
&+ \text{cyclic terms } - \text{repeated terms}.
\end{split}
\]
We estimate
\[
\begin{split}
&\left| \int  \sum_{\substack{q >Q\\ |q'-q|,|q''-q| \leq 2}} \p^{\a_1} u_{q'} \p^{\a_2} u_{q''} \p^{\a_3} u_{Q< \cdot \leq q} \right| \\
&\qquad \lesssim \sum_{\substack{q >Q\\ |q'-q|,|q''-q| \leq 2}} \l_q^{\a_1+\a_2} \|u_{q'}\|_2 \|u_{q''}\|_2 \sum_{Q <p\leq q} \l_p^{\a_3+1} \l_p^{-1} \| u_p\|_\infty  \\
&\qquad  \lesssim  c_0 \nu \sum_{\substack{q >Q\\ |q'-q|,|q''-q| \leq 2}} \l_q^{\a_1+\a_2+\a_3+1} \|u_{q'}\|_2 \|u_{q''}\|_2\\
&\qquad  \lesssim c_0 \nu \|u\|_{ H^{s+1}}^2.
\end{split}
\]
By choosing $c_0$ small enough in the definition of $\L$ we then ensure that this term is smaller than the viscous term in \eqref{test-s}. This finishes the proof.
\end{proof}

Note that in the inviscid case $\nu=0$ condition \eqref{BKM} is a variant of the Beale-Kato-Majda condition.
In the viscous case $\nu>0$ this theorem together with Theorem~\ref{t:LHreg}
implies the following

\begin{corollary}
Let $u$ be a Leray-Hopf solution to \eqref{NSE} with $\nu > 0$  on $[0,T]$, such that
\begin{equation} \label{BKM-cor}
\int_0^T \|\omega_{\leq Q(t)}(t)\|_{B^0_{\infty,\infty}} \, dt <\infty.
\end{equation}
Then $u(t)$ is regular on $(0,T]$. 
\end{corollary}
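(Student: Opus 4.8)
The plan is to deduce the corollary directly from \thm{thm:main} together with \thm{t:LHreg}, exploiting the fact that the only real difference between the two statements is the endpoint behavior at $t=0$: \thm{thm:main} requires regularity on the \emph{closed-at-the-left} interval $[0,T)$, whereas a Leray-Hopf solution is only guaranteed to be smooth on some subinterval away from $0$. First I would recall the classical fact that a Leray-Hopf solution of the NSE with $\nu>0$ is regular on an initial interval $(0,\e_0)$ for some $\e_0>0$ (local existence in $H^{1/2}$ and propagation of regularity), and more generally that the set of singular times is closed of measure zero; write $(0,T)\setminus\{\text{singular times}\}$ as a countable union of maximal open intervals of regularity. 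By \thm{t:LHreg} it suffices to show that on each maximal interval of regularity $(\a,\b)\subset(0,T)$ one has $\limsup_{t\to\b-}\|u(t)\|_{H^s}<\infty$ for some $s>1/2$; taking $s=3$, this is exactly the conclusion that \thm{thm:main} delivers on $[\a,T']$ for any $T'<\b$.

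The key steps, in order, are: (1) Fix a maximal interval of regularity $(\a,\b)\subset(0,T)$ with $\b<T$ (if $\b=T$ the solution is already regular up to $T$ in the relevant sense and there is nothing to do beyond the endpoint argument, handled by weak continuity as in the proof of \thm{thm:main}). Pick any $t_0\in(\a,\b)$; then $u$ restricted to $[t_0,T]$ is a weak solution of \eqref{NSE} which is regular on $[t_0,\b)$. (2) Apply \thm{thm:main} on the interval $[t_0,\b]$: the hypothesis $f\in L^1(0,T)$, i.e. \eqref{BKM-cor}, certainly implies $f\in L^1(t_0,\b)$, so \thm{thm:main} yields that $u$ is regular on $[t_0,\b]$, in particular $\|u(t)\|_{H^3}$ is continuous on $[t_0,\b]$ and hence bounded there; thus $\limsup_{t\to\b-}\|u(t)\|_{H^3}<\infty$. (3) Since this holds for every maximal interval of regularity inside $(0,T)$, \thm{t:LHreg} (Leray's criterion) applies and gives that $u$ is regular on $(0,T]$. (4) A small bookkeeping remark: one should note that \thm{thm:main} as stated requires the left endpoint, so one genuinely works on $[t_0,\b]$ with $t_0>\a$ rather than on $(\a,\b)$; this is harmless because $\|u(t_0)\|_{H^3}<\infty$ by regularity on $(\a,\b)$, so the solution starting at time $t_0$ is a bona fide $H^3$-continuous solution to which the theorem applies verbatim.

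I expect the only mild subtlety — the ``main obstacle'', such as it is — to be the structural argument about intervals of regularity: one must invoke that for Leray-Hopf solutions the singular set is closed with measure zero and that regularity holds on an initial interval, so that \thm{t:LHreg} is genuinely applicable and the global conclusion on $(0,T]$ follows from the local-near-$\b$ bounds. This is standard (it is implicit in the statement of \thm{t:LHreg} itself and in the discussion following it in the excerpt, "Viscous regular solutions are unique in the class of Leray-Hopf solutions"), but it is the one place where the argument uses something beyond a mechanical citation of \thm{thm:main}. Everything else is a direct composition of the two cited theorems, and no new estimates are required; in particular the energy inequality \eqref{SEI} is what underlies \thm{t:LHreg} and need not be re-derived here.
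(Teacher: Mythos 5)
Your proposal is correct and takes essentially the same route as the paper: the authors give no separate proof, merely noting that the corollary follows from combining Theorem~\ref{thm:main} with Leray's criterion (Theorem~\ref{t:LHreg}), and your argument is exactly the natural expansion of that composition (apply Theorem~\ref{thm:main} on $[t_0,\b]$ for each maximal interval of regularity $(\a,\b)$ to get $\limsup_{t\to\b-}\|u(t)\|_{H^3}<\infty$, then invoke Theorem~\ref{t:LHreg}).
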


In the next section we will further analyze the criterion \eqref{BKM-cor}
in the viscous case and show that it is weaker than every
Ladyzhenskaya-Prodi-Serrin condition.

\section{Regularity criteria in the viscous case}

In this section we will focus on the 3D Navier-Stokes equations, i.e.,
equations \eqref{NSE} with $\nu>0$.
In  \cite{CS} it was proved that a Leray-Hopf solution $u(t)$ is regular on $(0,T]$ provided
\begin{equation} \label{1}
\limsup_{q \to \infty} \sup_{t \in (0,T)} \l_q^{-1}
\|u_q(t)\|_{\infty} < c_0 \nu.
\end{equation}
In terms of $\L(t)$ this regularity condition  can be restated as
$\L \in L^\infty(0,T)$. 
It is also shown to be equivalent to the following small-jump condition:
\begin{equation} \label{jump-cond}
\sup_{t\in(0,T]}\limsup_{t_0 \to t-} \|u(t) - u(t_0)\|_{B^{-1}_{\infty,\infty}} < c_1 \nu.
\end{equation}
More precisely, \eqref{1} implies \eqref{jump-cond} with $c_1=2c_0$, and \eqref{jump-cond} implies \eqref{1} with $c_0 = 2c_1$. 

It is easy to see that Theorem~\ref{thm:main} improves this condition to
$\L \in L^{5/2}(0,T)$. Indeed, in view of \eqref{reverseQ} we have $f(t) \gtrsim \L(t)^2$, provided
$\L(t)>1$. On the other hand, by Bernstein inequality, 
\[
f(t) \lesssim  \sup_{q \leq Q(t)} \l^{5/2}_q\|u_q(t)\|_2 \lesssim 
\L(t)^{5/2}.
\]
In summary,
\begin{equation}\label{fL}
\L(t)^2 \lesssim f(t) \lesssim \L(t)^{5/2}, \qquad \text{whenever} \qquad
\L(t)>1.
\end{equation}
In particular, if $\L \in L^{5/2}(0,T)$ for some Leray-Hopf solution $u(t)$,
then $f \in L^1(0,T)$ and, consequently, $u(t)$ is regular on $(0,T]$ by
Theorem~\ref{thm:main}.
On the other hand, as we will see next, $\L(t)$ is integrable for every
Leray-Hopf solution.
Hence, this approach would require to fill the gap between $L^{1}$ and
$L^{5/2}$ in order to solve the regularity problem.

In what follows we will often integrate over the set $U=\{t: \L(t)>1\}$.
Note that $\L(t) < \infty$ a.e. for every Leray-Hopf solution.
Thus \eqref{reverseQ} implies
\begin{equation} \label{reverseQ-1}
\|u(t)_{Q(t)}\|_\infty \geq c_0 \nu \L(t), \qquad \text{for a.a.} \qquad  t \in U. 
\end{equation}

\begin{lemma}
Let $u(t)$ be a Leray-Hopf solution to \eqref{NSE} with $\nu>0$ on
$[0,T]$. Then  $\L \in L^1(0,T)$.
\end{lemma}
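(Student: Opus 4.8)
The plan is to bound $\int_0^T \L(t)\,dt$ by the energy dissipation, exploiting the lower bound \eqref{reverseQ-1} together with the fact that a Leray-Hopf solution controls $\nu\int_0^T\|\n u\|_2^2\,dt$. First I would restrict attention to the set $U=\{t:\L(t)>1\}$, since on its complement $\L(t)\le 1$ contributes at most $T$ to the integral. On $U$, \eqref{reverseQ-1} gives $c_0\nu\L(t)\le \|u_{Q(t)}(t)\|_\infty$, so I need to convert the $L^\infty$ norm of a single Littlewood-Paley block into something $L^2$-based that can be summed against the dissipation. The natural tool is Bernstein's inequality $\|u_q\|_\infty \lesssim \l_q^{3/2}\|u_q\|_2$, which combined with $\|u_q\|_2 \le \l_q^{-1}\|\n u_q\|_2$ yields $\|u_{Q(t)}(t)\|_\infty \lesssim \l_{Q(t)}^{1/2}\|\n u_{Q(t)}(t)\|_2 = \L(t)^{1/2}\|\n u_{Q(t)}(t)\|_2$.

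Next I would assemble these pieces: for $t\in U$,
\[
\L(t) \lesssim \frac{1}{\nu}\L(t)^{1/2}\|\n u_{Q(t)}(t)\|_2,
\]
hence $\L(t)^{1/2} \lesssim \nu^{-1}\|\n u_{Q(t)}(t)\|_2 \le \nu^{-1}\|\n u(t)\|_2$, and therefore
\[
\L(t) \lesssim \frac{1}{\nu^2}\|\n u(t)\|_2^2, \qquad t\in U.
\]
Integrating over $U$ and adding the trivial bound on $[0,T]\setminus U$ gives
\[
\int_0^T \L(t)\,dt \le T + \int_U \L(t)\,dt \lesssim T + \frac{1}{\nu^2}\int_0^T \|\n u(t)\|_2^2\,dt,
\]
and the last integral is finite because $u$ is Leray-Hopf, so $u\in L^2([0,T];H^1)$. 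This proves $\L\in L^1(0,T)$.

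The only genuine subtlety—and the point I would be most careful about—is the passage from $\|u_{Q(t)}(t)\|_\infty$ to $\|\n u(t)\|_2$: one must use Bernstein at the correct exponent so that the viscosity power matches what the energy inequality provides (a single factor $\nu^{-2}$ against $\int\|\n u\|_2^2$), and one must make sure the almost-everywhere finiteness of $\L$ and the validity of \eqref{reverseQ-1} a.e.\ on $U$ are invoked correctly rather than pointwise everywhere. A minor bookkeeping point is that $\|\n u_{Q(t)}(t)\|_2 \le \|\n u(t)\|_2$ uses orthogonality of the Littlewood-Paley pieces in $L^2$, which is standard. Everything else is routine once the chain of inequalities $\L \lesssim \nu^{-1}\L^{1/2}\|\n u\|_2 \Rightarrow \L \lesssim \nu^{-2}\|\n u\|_2^2$ is in place.
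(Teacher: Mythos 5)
Your proof is correct and follows essentially the same route as the paper: restrict to $U=\{\L>1\}$, use the lower bound \eqref{reverseQ-1} together with Bernstein's inequality $\|u_Q\|_\infty\lesssim \L^{3/2}\|u_Q\|_2$ to get $\L\lesssim \nu^{-2}\L^2\|u_Q\|_2^2\lesssim \nu^{-2}\|\n u\|_2^2$ pointwise a.e.\ on $U$, and integrate against the dissipation controlled by the Leray--Hopf energy inequality. The only cosmetic difference is that you pass through $\|\n u_Q\|_2$ explicitly while the paper keeps the equivalent quantity $\L^2\|u_Q\|_2^2$; the estimates are identical.
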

\begin{proof}
Let $u(t)$ be a Leray-Hopf solution on $[0,T]$ and $U = [0,T] \cup \{t: \L(t) > 1\}$. By Bernstein inequality
$\|u_p\|_\infty \leq \l_p^{3/2} \|u_p\|_2$. Therefore, since
$\|\nabla u(t)\|_2^2$ is integrable, using \eqref{reverseQ-1} we obtain
\[
c_0\nu \int_U \L(t) \, dt \leq 
\int_U \L(t)^{-1} \|u_{Q(t)}\|^2_\infty \, dt \leq 
\int_U \L(t)^2 \|u_{Q(t)}\|^2_2 \, dt  < \infty.
\]
\end{proof}

Now we will show that the viscous Beale-Kato-Majda condition $f \in L^1$ is
weaker than every Ladyzhenskaya-Prodi-Serrin regularity condition. First, observe that if $3/s+2/r = 1$, then
$$
L^rL^s \ss L^r B^{0}_{s,\infty} \ss L^r B^{2/r-1}_{\infty,\infty}.
$$
Hence, the latter is a larger class for regularity. We then have the following implication.
\begin{lemma} Let $u(t)$ be a Leray-Hopf solution to \eqref{NSE}
with $\nu>0$ on $[0,T]$.
If $u\in L^r((0,T);B^{2/r-1}_{\infty,\infty})$ for some $1\leq r < \infty$, then
$f \in L^1(0,T)$.
\end{lemma}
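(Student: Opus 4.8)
The goal is to show that if $u \in L^r((0,T);B^{2/r-1}_{\infty,\infty})$ with $1 \le r < \infty$, then $f \in L^1(0,T)$, where recall $f(t) = \|u_{\le Q(t)}(t)\|_{B^1_{\infty,\infty}} = \sup_{q \le Q(t)} \l_q \|u_q(t)\|_\infty$. The point is to interpolate: the exponent on $\l_q$ that $f$ demands is $1$, the hypothesis controls $\l_q^{2/r-1}\|u_q\|_\infty$, and the definition of $\L$ furnishes the dissipative gain $\l_q^{-1}\|u_q\|_\infty < c_0\nu$ for $q > Q(t)$ — but since inside $f$ we only have frequencies $q \le Q(t)$, the better route is to use the extra two powers of $\l_q$ in $\l_q^{3/2}$-type Bernstein bounds together with the $L^2$-energy, or more directly to control $\l_q\|u_q\|_\infty$ by a geometric combination of $\l_q^{2/r-1}\|u_q\|_\infty$ (supplied by hypothesis) and $\|u_q\|_\infty$ with a favorable power coming from $\L(t)$.

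**Main step.** First I would handle the set $U = \{t : \L(t) > 1\}$, since off $U$ one has $Q(t) = 0$ (or bounded), so $f(t) \lesssim \|u_0\text{-shells}\|$ is trivially controlled, say by $\|u(t)\|_2$ via Bernstein on finitely many low shells, and $\|u(t)\|_2$ is bounded for a Leray–Hopf solution; thus $f \in L^1$ on $[0,T] \setminus U$ automatically. On $U$, write for $q \le Q(t)$
\[
\l_q \|u_q(t)\|_\infty = \left(\l_q^{2/r-1}\|u_q(t)\|_\infty\right)^{\theta} \left(\l_q^{a}\|u_q(t)\|_\infty\right)^{1-\theta}
\]
and choose $\theta$ and the second exponent $a$ so that the powers of $\l_q$ match $1$; the cleanest choice is to interpolate between the hypothesis quantity and a quantity of the form $\l_q^{b}\|u_q\|_2$ controlled through Bernstein ($\|u_q\|_\infty \le \l_q^{3/2}\|u_q\|_2$) and the energy bound, or simply to use $\l_q \le \L(t)$ to absorb the deficit:
\[
\l_q \|u_q(t)\|_\infty = \l_q^{2-2/r}\cdot \l_q^{2/r-1}\|u_q(t)\|_\infty \le \L(t)^{2-2/r}\, \|u(t)\|_{B^{2/r-1}_{\infty,\infty}},
\]
valid because $2 - 2/r \ge 0$ for $r \ge 1$. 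Taking the supremum over $q \le Q(t)$ gives $f(t) \le \L(t)^{2-2/r}\|u(t)\|_{B^{2/r-1}_{\infty,\infty}}$ on $U$.

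**Finishing by Hölder.** Now combine this with the lower bound $f(t) \gtrsim \L(t)^2$ on $U$ from \eqref{fL}, which upon rearrangement yields $\L(t)^{2-2/r} \lesssim f(t)^{(r-1)/r}$ (since $\L(t)^{2-2/r} = (\L(t)^2)^{(r-1)/r} \lesssim f(t)^{(r-1)/r}$). Substituting, $f(t) \lesssim f(t)^{(r-1)/r}\|u(t)\|_{B^{2/r-1}_{\infty,\infty}}$ on $U$, hence $f(t)^{1/r} \lesssim \|u(t)\|_{B^{2/r-1}_{\infty,\infty}}$, i.e. $f(t) \lesssim \|u(t)\|_{B^{2/r-1}_{\infty,\infty}}^{r}$ on $U$. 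Integrating over $U \subset (0,T)$ and using the hypothesis $u \in L^r((0,T);B^{2/r-1}_{\infty,\infty})$ gives $\int_U f(t)\,dt < \infty$, and together with the trivial bound off $U$ we conclude $f \in L^1(0,T)$.

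**Where the difficulty lies.** The only genuinely delicate point is the case $r = 1$, where $2 - 2/r = 0$ and the interpolation is vacuous; there the hypothesis already reads $u \in L^1((0,T);B^1_{\infty,\infty})$, and since $f(t) = \|u_{\le Q(t)}(t)\|_{B^1_{\infty,\infty}} \le \|u(t)\|_{B^1_{\infty,\infty}}$ trivially, the claim is immediate without any use of $\L$. So the argument splits cleanly: $r=1$ is direct, $r>1$ uses the two-sided bound \eqref{fL} plus one application of $\l_q \le \L(t)$ and Hölder. I expect no essential obstacle beyond bookkeeping the low-frequency ($t \notin U$) contribution, which is handled by the uniform $L^2$ bound on Leray–Hopf solutions.
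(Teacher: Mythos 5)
Your proof is correct and follows essentially the same route as the paper's: the same split into $U=\{\L>1\}$ and its complement, the same pointwise bound $f(t)\le \L(t)^{2-2/r}\|u(t)\|_{B^{2/r-1}_{\infty,\infty}}$, and the same use of the reverse inequality $\|u_{Q(t)}\|_\infty\ge c_0\nu\L(t)$ (via $f\gtrsim_\nu\L^2$). The only difference is cosmetic: you absorb $f^{1-1/r}$ pointwise to get $f(t)\lesssim_\nu\|u(t)\|_{B^{2/r-1}_{\infty,\infty}}^r$ a.e.\ on $U$, whereas the paper applies H\"older in time to reach the same integrated conclusion.
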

\begin{proof}
The case $r=1$ follows from the definition of $f$. Now assume that $r>1$.
Let $U = [0,T] \cap \{t: \L(t) > 1\}$.
Clearly,
\[
\int_{[0,T]\setminus U} f(t) \, dt \lesssim \int_0^T \|u(t)\|_2 \, dt < \infty.
\]
On the other hand, using \eqref{reverseQ-1} we obtain
\[
\begin{split}
\int_U f(t) \, dt &\leq \int_U \L(t)^{2-2/r} \sup_q \l_q^{2/r-1}\|u_q(t)\|_\infty \, dt\\
&\leq
 \left(\int_U \L(t)^2 \, dt\right)^{1-1/r} \left(\int_U \sup_q \l_q^{2-r}\|u_q(t)\|_\infty^r
\, dt\right)^{1/r}\\
&\leq c(\nu)
\left(\int_U \L(t)^{2-r} \|u_{Q(t)}(t)\|_\infty^r \, dt\right)^{1-1/r} \|u\|_{L^r B^{2/r-1}_{\infty,\infty}}\\
&\leq c(\nu)  \|u\|^{r-1}_{L^r B^{2/r-1}_{\infty,\infty}}  \|u\|_{L^r B^{2/r-1}_{\infty,\infty}}\\
&= c(\nu)  \|u\|^r_{L^r B^{2/r-1}_{\infty,\infty}},
\end{split}
\] 
which concludes the proof.
\end{proof}

The regularity criterion in Theorem~\ref{thm:main} also allows to obtain
conditions under which a ``mild'' blow-up
(blow-up in norms only higher than some $B^{-r}_{\infty,\infty}$) is possible.
By the energy inequality and Bernstein estimates, every Leray-Hopf solution 
satisfies $u \in L^\infty((0,T);B^{-3/2}_{\infty,\infty})$.
Now assume that $u$ belongs to a smoother space $L^\infty((0,T);B^{-r}_{\infty,\infty})$
for some $r \in [0,3/2]$. Then the complementary condition for regularity is
$\L \in L^{1+r}$.
\begin{corollary} \label{cor-reg}
If $u(t)$ is a Leray-Hopf solution of \eqref{NSE} on $[0,T]$, such that
$u \in L^\infty B^{-r}_{\infty,\infty}$ and $\L \in L^{1+r}$, then
$u(t)$ is regular on $(0,T]$.
\end{corollary}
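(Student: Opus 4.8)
The plan is to combine the two regularity tools from the excerpt: Theorem~\ref{thm:main} (the criterion $f\in L^1$) together with the two-sided comparison $\L(t)^2\lesssim f(t)\lesssim \L(t)^{5/2}$ from \eqref{fL}, but now interpolated against the extra hypothesis $u\in L^\infty B^{-r}_{\infty,\infty}$. The point is that the upper bound $f(t)\lesssim \L(t)^{5/2}$ used the crude Bernstein estimate $\sup_{q\le Q}\l_q^{5/2}\|u_q\|_2\lesssim \L^{5/2}$; under the smoother assumption we can do better. Concretely, I would first reduce to integrating over $U=[0,T]\cap\{t:\L(t)>1\}$, since on the complement $f(t)\lesssim \|u(t)\|_2$ is integrable by weak continuity. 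On $U$, the goal is to show $\int_U f(t)\,dt<\infty$ and then invoke Theorem~\ref{thm:main}.

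The key step is the pointwise estimate on $U$. Write
\[
f(t)=\sup_{q\le Q(t)}\l_q\|u_q(t)\|_\infty
=\sup_{q\le Q(t)}\l_q^{1+r}\,\l_q^{-r}\|u_q(t)\|_\infty
\le \L(t)^{1+r}\,\|u(t)\|_{B^{-r}_{\infty,\infty}},
\]
using that $\l_q\le\L(t)$ for $q\le Q(t)$ and that $r\ge 0$. Hence $f(t)\lesssim \L(t)^{1+r}\|u\|_{L^\infty B^{-r}_{\infty,\infty}}$ for a.a. $t\in U$, and therefore
\[
\int_U f(t)\,dt \le \|u\|_{L^\infty B^{-r}_{\infty,\infty}}\int_U \L(t)^{1+r}\,dt
=\|u\|_{L^\infty B^{-r}_{\infty,\infty}}\,\|\L\|_{L^{1+r}(0,T)}^{1+r}<\infty
\]
by the hypothesis $\L\in L^{1+r}$. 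Combining with the bound off $U$ gives $f\in L^1(0,T)$, and Theorem~\ref{thm:main} (or rather its Corollary in the viscous case, via Theorem~\ref{t:LHreg}) yields regularity of $u$ on $(0,T]$.

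The estimate is essentially immediate once one sees the right way to split the frequency factor $\l_q = \l_q^{1+r}\cdot\l_q^{-r}$ and exploit the cap $\l_q\le\L(t)$; there is no real obstacle beyond bookkeeping. The one point that deserves a sentence of care is consistency with the known endpoints: at $r=0$ we recover the bound $f\lesssim \L$ on large-$\L$ times, which is weaker than $f\gtrsim\L^2$ on $U$ — this is not a contradiction since $r=0$ forces $u\in L^\infty L^\infty\cap\cdots$ essentially (in fact $u\in L^\infty B^0_{\infty,\infty}$), a very strong hypothesis; and at $r=3/2$, which every Leray-Hopf solution satisfies by the energy inequality plus Bernstein, the condition becomes $\L\in L^{5/2}$, exactly the threshold obtained in \eqref{fL}. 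So Corollary~\ref{cor-reg} interpolates cleanly between the trivial endpoint and the $L^{5/2}$ result, and the proof is just the one-line interpolation above.
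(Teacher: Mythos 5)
Your argument is correct and is essentially identical to the paper's own proof, which consists of exactly the same one-line estimate $f(t)=\sup_{q\le Q(t)}\l_q^{-r}\|u_q(t)\|_\infty\,\l_q^{1+r}\le\|u\|_{L^\infty B^{-r}_{\infty,\infty}}\L(t)^{1+r}$ followed by an appeal to Theorem~\ref{thm:main}. The extra splitting into $U$ and its complement is harmless but unnecessary, since $\L(t)\ge 1$ by definition and the pointwise bound holds everywhere.
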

\begin{proof}
Observe that
\[
f(t) = \sup_{q \leq Q(t)}\l_q^{-r} \|u_q(t)\|_\infty \l_q^{1+r}
\leq \|u\|_{L^\infty B^{-r}_{\infty,\infty}} \L(t)^{1+r}.
\]
Now the result follows from Theorem~\ref{thm:main}.
\end{proof}
In the scaling invariant case of $u \in L^\infty B^{-1}_{\infty,\infty}$ we find that $\L\in L^{2}$ is sufficient for regularity, although it is not clear whether this latter condition is already a consequence of the former.


\section{Connection with Kolmogorov's theory of turbulence}\label{s:turb}
Kolmogorov in 1941 \cite{K41} suggested that viscous effects in a turbulent flow are negligible in the inertial range,
the range below Kolmogorov's dissipation wave number
\[
\kappa_\mathrm{d} = \left(\frac{\epsilon}{\nu^3}\right)^{1/4},
\]
where the energy dissipation rate $\epsilon$ is usually defined as
\[
\epsilon = \nu\langle \|\nabla u\|_2^2\rangle =\frac{\nu}{T}\int_0^T \|\nabla u\|_2^2 \, dt.
\]
However, numerical and emperical observations show than one also needs to take into account effects of spacial intermittency which change the dimension of $\e$. With this intermittency correction the formula for $\kappa_\mathrm{d}$ becomes
\begin{equation} \label{defk_d}
\kappa_\mathrm{d} = \left( \frac{\e}{\nu^3} \right)^{\frac{1}{d+1}},
\end{equation}
where the parameter $d\in[0,3]$ represents the dimension
of the set in the physical space where the dissipation occurs. Then $s=3-d$
represents the dimension of the set in the Fourier space where dissipation occurs. We define $s$ so that it
captures the level of saturation of the Bernstein inequality in the last dyadic shell in the inertial range. More precisely, by Bernstein we have inequalities
$$
\langle \L^{-1}\| u_{Q}\|_\infty^2\rangle \lesssim \langle\L^2 \|u_Q\|_2^2 \rangle  \lesssim \langle\L^2 \|u_Q\|_\infty^2 \rangle.
$$
We let $s\in [0,3]$ be any parameter for which 
\begin{equation}\label{}
 \langle \L^{2-s}\| u_{Q}\|_\infty^2\rangle \lesssim \langle\L^2 \|u_Q\|_2^2 \rangle\ .
\end{equation}
Let $U = [0,T] \cap \{t: \L(t) > 1\}$
and
\[
\langle \L \rangle_U = \frac{1}{T}\int_U \L(t) \, dt.
\]
Then using \eqref{reverseQ-1}  we have
\[
\begin{split}
\langle \L\rangle - 1 &\leq  \langle \L \rangle_U
= \left(\frac{\langle \L(\nu c_0)^{\frac{2}{4-s}}\rangle_U^{4-s}}{\nu^{2}c_0^{2}} \right)^{\frac{1}{4-s}}
\leq \left(\frac{\langle \L^{\frac{2-s}{4-s}} \|u_Q\|_\infty^{\frac{2}{4-s}} \rangle^{4-s}}{\nu^{2}c_0^{2}} \right)^{\frac{1}{4-s}}\\
&\leq \left(\frac{\langle \L^{2-s} \|u_Q\|_\infty^2 \rangle}{\nu^{2}c_0^{2}} \right)^{\frac{1}{4-s}}
\lesssim \left(\frac{\langle \L^2 \|u_Q\|^2_2\rangle}{\nu^{2}c_0^{2}} \right)^{\frac{1}{4-s}}\\
&\lesssim \left(\frac{\epsilon}{\nu^{3}} \right)^{\frac{1}{4-s}}.
\end{split}
\]
So, we obtain
$$
\langle \L\rangle \lesssim \kappa_\mathrm{d}.
$$ 
Hence $\langle \L \rangle$ defines the dissipation range more precisely than $\kappa_\mathrm{d}$. In other words, generally the effects of viscosity start to manifest earlier than predicted by the dimensional argument.  Now we will show that Leray-Hopf solutions are regular
when $d = 3-s$ is close to $3$ and in particular in Kolmogorov 41 regime of $d=3$. 
\begin{theorem}\label{t:inter}
Let $u(t)$ be a Leray-Hopf solution of \eqref{NSE} on $[0,T]$ for which
$d> 3/2$. Then $u(t)$ is regular on $(0,T]$.
\end{theorem}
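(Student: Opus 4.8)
The plan is to reduce \thm{t:inter} to the regularity criterion of \thm{thm:main} by showing that the hypothesis $d > 3/2$ (equivalently $s = 3-d < 3/2$) forces the dissipation wavenumber to satisfy $\L \in L^{1+r}(0,T)$ for a suitable $r < 1/2$, and then invoke \cor{cor-reg}. The starting point is the chain of Bernstein inequalities already set up in this section: for the parameter $s$ we have $\langle \L^{2-s}\|u_Q\|_\infty^2 \rangle \lesssim \langle \L^2 \|u_Q\|_2^2\rangle \lesssim \epsilon/\nu^3 < \infty$, since $\|u_Q(t)\|_2^2 \le \|u(t)\|_2^2 \cdot$(something controlled) and $\langle \L^2 \|u_Q\|_2^2\rangle \le \langle \|\n u\|_2^2\rangle = \epsilon/\nu < \infty$ because $\L \|u_Q\|_2 \lesssim \|\n u_Q\|_2$. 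So the right-hand side is finite for every Leray-Hopf solution.

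The key step is then to bound $\int_U f(t)\,dt$ using the definition $f(t) = \sup_{q \le Q(t)} \l_q \|u_q(t)\|_\infty$ and the reverse inequality \eqref{reverseQ-1}, which gives $\|u_{Q(t)}(t)\|_\infty \ge c_0 \nu \L(t)$ on $U$. The natural estimate, following the same interpolation idea as in the proof that $f \in L^1$ is weaker than the Ladyzhenskaya-Prodi-Serrin conditions, is to interpolate: on $U$ one has $f(t) \gtrsim \L(t)^2$, and we want to turn a finite bound on $\langle \L^{2-s}\|u_Q\|_\infty^2\rangle$ into a finite bound on $\int_U f\,dt = \int_U \L^2\,dt$ (up to constants depending on $\nu$). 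Using \eqref{reverseQ-1} to write $\L(t) \lesssim (\nu c_0)^{-1}\|u_{Q(t)}(t)\|_\infty$, one substitutes to get $\int_U \L(t)^2\,dt \lesssim \nu^{-s}\int_U \L(t)^{2-s}\|u_{Q(t)}(t)\|_\infty^s\,dt$... but the exponents on $\|u_Q\|_\infty$ don't immediately match, so one applies Hölder with a carefully chosen split to interpolate between the finite quantity $\int_U \L^{2-s}\|u_Q\|_\infty^2\,dt$ and the factor $\int_U \L^2\,dt$ itself. This is where the condition $s < 3/2$ enters: it ensures the self-referential Hölder argument closes, i.e., the power of $\int_U \L^2\,dt$ appearing on the right is strictly less than $1$, so it can be absorbed.

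Concretely, I would write $f(t) \lesssim \L(t)^{2} $ on $U$ only as a lower bound; for the upper bound I would instead estimate $\int_U f(t)\,dt \le \int_U \L(t)^{-1}\|u_{Q(t)}\|_\infty^2\,dt$ (since $f(t) = \sup_{q\le Q}\l_q\|u_q\|_\infty$ and the sup is essentially attained comparably to $\L \|u_Q\|_\infty$ after the frequency-separation bookkeeping, or more safely bound $f$ by a power of $\L$ times $\|u_Q\|_\infty$ via Bernstein $\l_q\|u_q\|_\infty \le \l_q^{5/2}\|u_q\|_2 \le \L^{5/2}\|u\|_2^{...}$). Then write $\L^{-1}\|u_Q\|_\infty^2 = \L^{s-2}\|u_Q\|_\infty^2 \cdot \L^{1-s}$ and apply Hölder with exponents $p = 1$ on the first factor if $s$ were nice, or in general split $\L^{1-s}$ against the finite quantity to produce $(\int_U \L^{2-s}\|u_Q\|_\infty^2)^{\theta}(\int_U \L^{2})^{1-\theta}$ with $\theta$ determined by $s$; solving $1-\theta < 1$, i.e. $\theta > 0$, together with the requirement that the $\L$-exponent on the second factor be exactly $2$, pins down that we need $s < 3/2$. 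Once $\int_U \L^2\,dt < \infty$ (hence $\L \in L^2$, which is $\L \in L^{1+r}$ with $r=1$, and combined with $u \in L^\infty B^{-3/2}_{\infty,\infty} \subset L^\infty B^{-1}_{\infty,\infty}$ actually $\L \in L^{1+r}$ for the relevant $r \le 1/2$ suffices), \cor{cor-reg} — or directly \thm{thm:main} via $f \in L^1$ — yields regularity on $(0,T]$.

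The main obstacle I expect is making the self-improving Hölder argument genuinely close: one must be careful that the auxiliary quantity appearing is literally $\int_U \L^2\,dt$ (not some other power), so that it can be subtracted, and one must verify it is a priori finite — but this follows because $\langle \L \rangle_U < \infty$ by the lemma showing $\L \in L^1$ together with... no, $L^1$ does not give $L^2$, so instead one must run the Hölder inequality on a truncated set $U_N = U \cap \{\L \le N\}$ where $\int_{U_N}\L^2 < \infty$ trivially, obtain a bound independent of $N$, and pass to the limit by monotone convergence. That truncation-and-absorption bookkeeping, rather than any deep new estimate, is the delicate point; everything else is interpolation against quantities already shown finite in this section and in the $L^1$ lemma.
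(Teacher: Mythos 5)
Your proposal assembles the right ingredients (the reverse inequality \eqref{reverseQ-1}, the defining property of $s$, the finiteness of $\langle \L^2\|u_Q\|_2^2\rangle \lesssim \langle\|\n u\|_2^2\rangle$ for Leray--Hopf solutions, and the reduction to $f\in L^1$), but it aims at the wrong exponent, and the route you sketch does not close. To invoke \thm{thm:main} you need $f\in L^1$, and the only upper bound on $f$ available for a general Leray--Hopf solution is $f\lesssim \L^{5/2}$ from \eqref{fL}; so the quantity to control is $\int_U\L^{5/2}\,dt$, not $\int_U\L^2\,dt$. Knowing $\L\in L^2$ would give regularity only if one also knew $u\in L^\infty B^{-1}_{\infty,\infty}$ (\cor{cor-reg} with $r=1$), which is precisely what is \emph{not} known: your inclusion $L^\infty B^{-3/2}_{\infty,\infty}\subset L^\infty B^{-1}_{\infty,\infty}$ is backwards, since $B^{-1}_{\infty,\infty}\subset B^{-3/2}_{\infty,\infty}$. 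The intermediate bound $f(t)\le \L(t)^{-1}\|u_{Q(t)}\|_\infty^2$ that you use to reach the $L^2$ target is also false in general: the supremum defining $f$ need not be attained near $q=Q$, and even there Bernstein permits $\l_Q\|u_Q\|_\infty\sim\L^{5/2}$ while $\L^{-1}\|u_Q\|_\infty^2\lesssim\L^2$.

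Moreover, no self-referential H\"older argument, truncation, or absorption is needed. The hypothesis $s<3/2$ enters only through the pointwise inequality $\L^{1/2}\le\L^{2-s}$ on $U=\{\L>1\}$ (since $2-s>1/2$). Substituting \eqref{reverseQ-1} directly,
\[
c_0^2\nu^2\int_U\L^{5/2}\,dt\le\int_U\L^{1/2}\|u_{Q}\|_\infty^2\,dt\le\int_U\L^{2-s}\|u_{Q}\|_\infty^2\,dt\lesssim\int_U\L^{2}\|u_{Q}\|_2^2\,dt<\infty,
\]
the last two steps being the definition of $s$ and the energy inequality. Hence $\L\in L^{5/2}$, so $f\in L^1$ by \eqref{fL}, and \thm{thm:main} (applied on intervals of regularity, together with \thm{t:LHreg}) gives regularity on $(0,T]$. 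This direct substitution is the paper's entire proof; equivalently, plugging $\|u_Q\|_\infty\ge c_0\nu\L$ into the finite quantity $\langle\L^{2-s}\|u_Q\|_\infty^2\rangle$ yields $\int_U\L^{4-s}<\infty$ with $4-s>5/2$, which already matches the upper bound in \eqref{fL}. The interpolation machinery you describe is both unnecessary and, as set up, pointed at a target that would not imply the theorem.
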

\begin{proof}
Let $U = [0,T] \cap \{t: \L(t) > 1\}$. Since $s<3/2$ we have via \eqref{reverseQ-1} 
\begin{equation}
\begin{split}
c_0^2\nu^2 \int_U \L(t)^{5/2} \, dt &\leq 
\int_U \L(t)^{1/2} \|u_{Q(t)}(t)\|_\infty^2 \, dt\\
&\lesssim \int_U \L(t)^2 \|u_{Q(t)}(t)\|_2^2 \, dt\\
&< \infty.
\end{split}
\end{equation}
Therefore $f \in L^1(0,T)$ by \eqref{fL}, and hence $u(t)$ is regular on
$(0,T]$ due to \thm{thm:main}.

\end{proof}

\section{Regularity of hyperdissipative NSE}\label{s:hyper}
In this section we will apply the developed technique to give a simple proof of a recent result by T. Tao, \cite{T} on regularity of slightly
supercritical hyperdissipative NSE. Let us consider the system 
\begin{equation} \label{NSE-gen}
\left\{
\begin{aligned}
&\partial_t u + (u \cdot \nabla)u = - \nu D^2 u  - \nabla p = 0, \qquad x \in \mathbb{R}^3, t\geq 0,\\
&\nabla \cdot u =0,\\
&u(0)=u_0,
\end{aligned}
\right.
\end{equation}
where $D$ is a Fourier multiplier whose symbol $m(\xi ) =|\xi|^{5/4}/g(|\xi|)$,
and $g(|\xi|)= \log^{1/4} (2 + |\xi|^2)$.

Consider a Leray-Hopf solution to \eqref{NSE-gen} on $[0,T]$. Due to the
energy inequality we have
\[
\int_0^T \|Du\|_2^2 \, dx < \infty.
\]
Let us fix $\e\in(0,1)$ and use the weak formulation of the NSE with the test-function
$\l_q^{1+\e}(u_q)_q$. As in \cite{CS}, on every interval of regularity of $u(t)$ we obtain
\[
\frac{1}{2}\frac{d}{dt}\sum_{q=-1}^\infty \l_q^{1+\e}\|u_q\|_2^2 + \nu \sum_{q = -1}^\infty \frac{\l_q^{7/2+\e}}{g(\l_q)^2}\|u_q\|_2^2 \leq
C \sum_{q = -1}^\infty \l_q^{3+\e} \|u_q\|_2^2 ( \l_q^{-1} \|u_q\|_\infty),
\]
where $C>0$. Let $c_0=\nu/C$. Now we adapt the definition of $\L$ to these settings as follows:
\[
\L(t)=\min\{\l_q: \l_p^{-3/2} g(\l_p)^2\|u_p(t)\|_\infty \leq c_0\nu \ \forall \ p > q, q\geq 0\}.
\]
This again ensures that the linear term dominates the nonlinear term above $\L(t)$. 
Therefore we have
\[
\frac{1}{2}\frac{d}{dt} E(t)
\lesssim f(t) E(t)
\]
on every interval of regularity of $u(t)$. Here
\[
E(t)=\sum_{q} \l_q^{1+\e}\|u_q\|_2^2, \qquad f(t)=\|u_{\leq Q(t)}(t)\|_{B^{1}_{\infty,\infty}}
\]
as before. Now note that
\[
\begin{split}
f(t) &\leq \frac{1}{c_0\nu} \L^{-3/2}g(\L)^2 \|u_Q\|_\infty \sup_{q \leq Q} \l_q \|u_q\|_\infty\\
&\leq \frac{1}{c_0\nu}g(\L)^2 \L^{5/2} \sup_{q\leq Q} \|u_q\|_2^2\\
&\leq \frac{1}{c_0\nu}g(\L)^4 \|Du\|_2^2,
\end{split}
\]
provided $\L(t) \geq 1$.
In addition, we have
\[
\begin{split}
\log(2+\L^2) &\lesssim \log(2+\L^{1+\epsilon/2})\\
&\leq \log\left(2+\L^{1+\epsilon/2}\frac{1}{c_0\nu}\L^{-3}g(\L)^{4}\|u_Q\|_\infty^2\right)\\
&\lesssim \log\left(2+\frac{1}{c_0\nu}\L^{1+\epsilon}\|u_Q\|_2^2\right)\\
&\leq \log\left(2+\frac{1}{c_0\nu} E(t)\right).
\end{split}
\]
Hence we obtain the following differential inequality:
\[
\frac{1}{2}\frac{d}{dt} E(t)
\lesssim \log\left(2+\frac{1}{c_0\nu} E(t)\right) \|Du(t)\|_2^2 E(t).
\]
Since $\|Du(t)\|_2^2$ is integrable, $E(t)$ does not blow up.

The conjecture made in \cite{T} concerning regularity in the case where $g(|\xi|)= \log^{1/2} (2 + |\xi|^2)$ does not seem to follow from the argument above.

\section{Appendix: Besov-type logarithmic Sobolev inequality}
Although the  logarithmic Sobolev inequality used in the proof of \thm{thm:main} is known (see \cite{jap}) we include its proof here as well for the convenience of the reader. Let $u\in H^s(\R^n)$, for some $s>n/2$. Then 
\begin{equation}\label{log-sob}
\| u\|_{L^\infty} \leq C \|u\|_{B^{0}_{\infty,\infty}} (1 + \log_+ \| u\|_{H^s}).
\end{equation}
Indeed, for any $q\in\N$ we have (up to some absolute constants)
\[
\begin{split}
\|u\|_{L^\infty}& \lesssim \sum_{p\leq q} \|u_p\|_{\infty} + \sum_{p>q} \|u_p\|_{\infty} \\
&\lesssim  q \|u\|_{B^{0}_{\infty,\infty}} + \sum_{p>q} \l_p^{n/2-s} \l_p^s \|u_p\|_{2} \\
&\lesssim q \|u\|_{B^{0}_{\infty,\infty}} + \l_q^{n/2 - s} \| u\|_{H^s}.
\end{split}
\]
Optimizing over $q$ immediately gives \eqref{log-sob}.


\begin{thebibliography}{10}

\bibitem{amann}
Herbert Amann.
\newblock On the strong solvability of the {N}avier-{S}tokes equations.
\newblock {\em J. Math. Fluid Mech.}, 2(1):16--98, 2000.

\bibitem{BKM}
J.~T. Beale, T.~Kato, and A.~Majda.
\newblock Remarks on the breakdown of smooth solutions for the {$3$}-{D}
  {E}uler equations.
\newblock {\em Comm. Math. Phys.}, 94(1):61--66, 1984.

\bibitem{bv}
C.~Bjorland and A.~Vasseur.
\newblock Weak in space, log in time improvement of the
  {L}adyzenskaja-{P]rodi-{S}errin criteria},
  url={http://arxiv.org/abs/0912.2969},.

\bibitem{ct}
Chongsheng Cao and Edriss~S. Titi.
\newblock Regularity criteria for the three-dimensional {N}avier-{S}tokes
  equations.
\newblock {\em Indiana Univ. Math. J.}, 57(6):2643--2661, 2008.

\bibitem{CS}
A.~Cheskidov and R.~Shvydkoy.
\newblock The regularity of weak solutions of the 3{D} {N}avier-{S}tokes
  equations in {$B^{-1}_{\infty,\infty}$}.
\newblock {\em Arch. Ration. Mech. Anal.}, 195(1):159--169, 2010.

\bibitem{veiga}
Hugo~Beir{\~a}o da~Veiga.
\newblock A sufficient condition on the pressure for the regularity of weak
  solutions to the {N}avier-{S}tokes equations.
\newblock {\em J. Math. Fluid Mech.}, 2(2):99--106, 2000.

\bibitem{Frisch}
Uriel Frisch.
\newblock {\em Turbulence}.
\newblock Cambridge University Press, Cambridge, 1995.
\newblock The legacy of A. N. Kolmogorov.

\bibitem{ESS}
L.~Iskauriaza, G.~A. Ser{\"e}gin, and V.~Shverak.
\newblock {$L_{3,\infty}$}-solutions of {N}avier-{S}tokes equations and
  backward uniqueness.
\newblock {\em Uspekhi Mat. Nauk}, 58(2(350)):3--44, 2003.

\bibitem{Kato}
\newblock T. Kato.
\newblock Nonstationary ßows of viscous and ideal ßuids in $R^3$.
\newblock {\em J. Functional Analysis}, 9:296--305, 1972. 

\bibitem{K41}
A.~Kolmogoroff.
\newblock The local structure of turbulence in incompressible viscous fluid for
  very large {R}eynold's numbers.
\newblock {\em C. R. (Doklady) Acad. Sci. URSS (N.S.)}, 30:301--305, 1941.

\bibitem{jap}
Hideo Kozono, Takayoshi Ogawa, and Yasushi Taniuchi.
\newblock The critical {S}obolev inequalities in {B}esov spaces and regularity
  criterion to some semi-linear evolution equations.
\newblock {\em Math. Z.}, 242(2):251--278, 2002.

\bibitem{Leray}
Jean Leray.
\newblock Sur le mouvement d'un liquide visqueux emplissant l'espace.
\newblock {\em Acta Math.}, 63(1):193--248, 1934.

\bibitem{P}
Fabrice Planchon.
\newblock An extension of the {B}eale-{K}ato-{M}ajda criterion for the {E}uler
  equations.
\newblock {\em Comm. Math. Phys.}, 232(2):319--326, 2003.

\bibitem{T}
Terence Tao.
\newblock Global regularity for a logarithmically supercritical
  hyperdissipative {N}avier-{S}tokes equation.
\newblock {\em Anal. PDE}, 2(3):361--366, 2009.

\bibitem{temam}
Roger Temam.
\newblock {\em Navier-{S}tokes equations}.
\newblock AMS Chelsea Publishing, Providence, RI, 2001.
\newblock Theory and numerical analysis, Reprint of the 1984 edition.

\end{thebibliography}

\end{document}